\newtheorem{theorem}{Theorem}
\newtheorem{corollary}[theorem]{Corollary}
\newtheorem{proposition}[theorem]{Proposition}
\newcommand{\msn}{{\mathcal S}_n}
\newcommand{\Stirling}[2]{\genfrac{\{}{\}}{0pt}{}{#1}{#2}}
\newcommand{\Eulerian}[2]{\genfrac{<}{>}{0pt}{}{#1}{#2}}
\title{Bessel polynomials, double factorials and context-free grammars\footnote{This work is supported by~NSFC (11126217) and the Fundamental Research Funds for the Central Universities (N100323013).}}
\author
{Shi-Mei Ma \footnote{ {\it Email address:}
shimeima@yahoo.com.cn (S.-M. Ma)} }
\date{\footnotesize School of Mathematics and Statistics,
        Northeastern University at Qinhuangdao,\\ Hebei 066004,
        China}
\begin{document}

\maketitle

\begin{abstract}
The purpose of this paper is to show that Bessel polynomials, factorials and Catalan triangle can be generated by using
context-free grammars.
\bigskip\\
{\sl Keywords:}\quad Bessel polynomials; Double factorials; Catalan triangle; Context-free grammars
\end{abstract}
\section{Introduction}\label{sec:intro}
The grammatical method was introduced by Chen~\cite{Chen93} in the study of exponential structures in combinatorics. Let $A$ be an alphabet whose letters are regarded as independent commutative indeterminates. Following Chen~\cite{Chen93}, a {\it context-free grammar} $G$ over $A$ is defined as a set of substitution rules replacing a letter in $A$ by a formal function over $A$.
The formal derivative $D$ is a linear operator defined with respect to a context-free grammar $G$.
For example, if $G=\{x\rightarrow xy, y\rightarrow y\}$, then $$D(x)=xy,D(y)=y,D^2(x)=x(y+y^2),D^3(x)=x(y+3y^2+y^3).$$
For any formal functions $u$ and $v$, we have
$$D(u+v)=D(u)+D(v),\quad D(uv)=D(u)v+uD(v) \quad and\quad D(f(u))=\frac{\partial f(u)}{\partial u}D(u),$$
where $f(x)$ is a analytic function.
Using Leibniz's formula, we obtain
\begin{equation}\label{Dnab-Leib}
D^n(uv)=\sum_{k=0}^n\binom{n}{k}D^k(u)D^{n-k}(v).
\end{equation}

Let $[n]=\{1,2,\ldots,n\}$. {\it The Stirling number of the second kind}
$\Stirling{n}{k}$ is the number of ways to partition $[n]$ into $k$ blocks.
Let $\msn$ denote the symmetric group of all permutations of $[n]$.
The {\it Eulerian number} $\Eulerian{n}{k}$ enumerates the number of permutations in $\msn$
with $k$ descents (i.e., $i<n,\pi(i)>\pi(i+1)$). The numbers $\Eulerian{n}{k}$
satisfy the recurrence relation
$$\Eulerian{n}{k}=(k+1)\Eulerian{n-1}{k}+(n-k)\Eulerian{n-1}{k-1},$$
with initial condition $\Eulerian{0}{0}=1$ and boundary conditions $\Eulerian{0}{k}=0$ for $k\geq 1$.
There is a close relationship between context-free grammars and combinatorics.
The reader is referred to~\cite{Chen121,Ma12} for recent results on this subject.
Let us now recall two classical results.
\begin{proposition}[{\cite[Eq. 4.8]{Chen93}}]
If $G=\{x\rightarrow xy, y\rightarrow y\}$,
then
\begin{equation*}
D^n(x)=x\sum_{k=1}^n\Stirling{n}{k}y^k.
\end{equation*}
\end{proposition}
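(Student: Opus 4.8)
The plan is to establish the formula by induction on $n$, using the product rule and chain rule for the formal derivative $D$ together with the classical recurrence $\Stirling{n+1}{k}=k\Stirling{n}{k}+\Stirling{n}{k-1}$ for the Stirling numbers of the second kind. For the base case $n=1$, the grammar gives $D(x)=xy$ directly, which matches $x\sum_{k=1}^{1}\Stirling{1}{k}y^k=x\Stirling{1}{1}y=xy$, so the identity holds.

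For the inductive step, I would assume the identity for $n$ and compute $D^{n+1}(x)=D\bigl(D^n(x)\bigr)=D\bigl(x\sum_{k=1}^{n}\Stirling{n}{k}y^k\bigr)$. The product rule splits this into two contributions. The first comes from differentiating the factor $x$: since $D(x)=xy$, it yields $xy\sum_{k=1}^{n}\Stirling{n}{k}y^k=x\sum_{k=1}^{n}\Stirling{n}{k}y^{k+1}$. The second comes from differentiating the sum: since $D(y)=y$, the chain rule gives $D(y^k)=ky^{k-1}D(y)=ky^k$, so this contribution is $x\sum_{k=1}^{n}k\Stirling{n}{k}y^k$.

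It then remains to combine the two sums and recognize the result. Reindexing the first sum by $j=k+1$ turns it into $x\sum_{k=2}^{n+1}\Stirling{n}{k-1}y^k$, and after aligning the ranges of summation the coefficient of $y^k$ becomes $k\Stirling{n}{k}+\Stirling{n}{k-1}$, which is exactly $\Stirling{n+1}{k}$ by the Stirling recurrence. This gives $D^{n+1}(x)=x\sum_{k=1}^{n+1}\Stirling{n+1}{k}y^k$ and closes the induction. The argument is essentially routine; the only point requiring care is the bookkeeping in the reindexing step, where one must check that the boundary terms match the boundary conditions $\Stirling{n}{0}=0$ and $\Stirling{n}{n+1}=0$ so that the two sums combine cleanly over $1\le k\le n+1$.
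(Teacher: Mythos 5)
Your proof is correct: the base case, the product/chain-rule computation giving $D(y^k)=ky^k$, the reindexing, and the boundary checks $\Stirling{n}{0}=0$, $\Stirling{n}{n+1}=0$ are all handled properly, and the Stirling recurrence $\Stirling{n+1}{k}=k\Stirling{n}{k}+\Stirling{n}{k-1}$ closes the induction. The paper itself gives no proof of this proposition (it is quoted from Chen), but your argument is exactly the recurrence-matching technique the paper applies to its own theorems, so it is the natural and expected proof.
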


\begin{proposition}[{\cite[Section 2.1]{Dumont96}}]
If $G=\{x\rightarrow xy, y\rightarrow xy\}$,
then
\begin{equation*}
D^n(x)=x\sum_{k=0}^{n-1}\Eulerian{n}{k}x^{k}y^{n-k}.
\end{equation*}
\end{proposition}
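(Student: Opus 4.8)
The plan is to prove the identity by induction on $n$, using the recurrence for the Eulerian numbers recalled above. For the base case $n=1$ I would simply note that $D(x)=xy$, which agrees with the right-hand side $x\Eulerian{1}{0}y=xy$ once one checks that $\Eulerian{1}{0}=1$ follows from the recurrence together with the initial and boundary data.

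The crucial preliminary computation is the action of $D$ on an arbitrary monomial. Since the grammar $G=\{x\rightarrow xy,\ y\rightarrow xy\}$ gives $D(x)=D(y)=xy$, the product rule yields
$$D(x^a y^b)=a\,x^{a-1}(xy)y^b+b\,x^a y^{b-1}(xy)=a\,x^a y^{b+1}+b\,x^{a+1}y^b.$$
This single formula is the engine of the whole argument: differentiating either raises the exponent of $y$ (weighted by the $x$-exponent) or raises the exponent of $x$ (weighted by the $y$-exponent), and these are precisely the two moves encoded in the Eulerian recurrence.

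Assuming the identity for $n$, that is $D^n(x)=\sum_{k=0}^{n-1}\Eulerian{n}{k}x^{k+1}y^{n-k}$, I would apply the monomial formula above to each summand with $a=k+1$ and $b=n-k$, producing two families of monomials. Collecting the coefficient of $x^{j+1}y^{n+1-j}$ in $D^{n+1}(x)$, the first family contributes $(j+1)\Eulerian{n}{j}$ (from $k=j$) and the second contributes $(n+1-j)\Eulerian{n}{j-1}$ (from $k=j-1$). Their sum is exactly the right-hand side of $\Eulerian{n+1}{j}=(j+1)\Eulerian{n}{j}+(n+1-j)\Eulerian{n}{j-1}$, which closes the induction.

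The argument has no deep obstacle; the only care needed is the bookkeeping at the two ends of the summation range. When $j=0$ or $j=n$, one of the two contributing terms involves an Eulerian number with an index outside $\{0,\dots,n-1\}$, and I would invoke the boundary convention $\Eulerian{n}{k}=0$ for such $k$ so that the two index-shifted sums align cleanly with the range $0\le j\le n$ appearing on both sides. Verifying this boundary behavior, rather than any substantive computation, is the main (and only minor) subtlety.
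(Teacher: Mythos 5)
Your proof is correct, but note that the paper itself offers no proof of this proposition: it is recalled as a classical result and attributed to Dumont with a citation, so there is no in-paper argument to compare against. Your induction is a sound, self-contained verification. The monomial formula $D(x^ay^b)=a\,x^ay^{b+1}+b\,x^{a+1}y^b$ is the right engine, the coefficient extraction at $x^{j+1}y^{n+1-j}$ is accurate, and matching against the recurrence
$$\Eulerian{n+1}{j}=(j+1)\Eulerian{n}{j}+(n+1-j)\Eulerian{n}{j-1}$$
(the paper's recurrence with $n$ shifted by one) closes the induction correctly. Your handling of the boundary terms is also right, with one small caveat: the paper states only $\Eulerian{0}{0}=1$ and $\Eulerian{0}{k}=0$ for $k\geq 1$, so the conventions you invoke, namely $\Eulerian{n}{k}=0$ for $k<0$ and for $k\geq n\geq 1$, should be stated as part of the setup (they are standard, and the second follows by induction from the recurrence once the first is adopted). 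It is worth remarking that your argument is essentially the same technique the paper uses to prove its own Theorems: expand $D^n$ with undetermined coefficients, apply $D$ once, read off a recurrence, and identify it with a known one together with initial conditions; you have simply run that scheme in the inductive direction, which is a natural way to supply the missing proof.
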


The purpose of this paper is to show that Bessel polynomials, factorials and Catalan triangle can be generated by using context-free grammars.
\section{Bessel polynomials}\label{sec:intro}
The well known {\it Bessel polynomials} $y_n(x)$ were introduced by Krall and Frink~\cite{Krall45}
as the polynomial solutions of the second-order differential equation
$$x^2\frac{d^2y_n(x)}{dx^2}+(2x+2)\frac{dy_n(x)}{dx}=n(n+1)y_n(x).$$
The Bessel polynomials $y_n(x)$ are a family of orthogonal polynomials and they have been extensively studied and applied (see~\cite[\textsf{A001498}]{Sloane}).
The polynomials $y_n(x)$ can be generated by using the Rodrigues formula
$$y_n(x)=\frac{1}{2^n}e^{\frac{2}{x}}\frac{d^n}{dx^n}\left(x^{2n}e^{-\frac{2}{x}}\right).$$
Explicitly, we have
$$y_n(x)=\sum_{k=0}^n\frac{(n+k)!}{(n-k)!k!}\left(\frac{x}{2}\right)^k.$$
These polynomials satisfy the recurrence relation
\begin{equation*}\label{ynx-recu}
y_{n+1}(x)=(2n+1)xy_n(x)+y_{n-1}(x)\quad {\text for}\quad n\geq 0,
\end{equation*}
with initial conditions $y_{-1}(x)=y_{0}(x)=1$.
The first few of the polynomials $y_n(x)$ are
\begin{align*}
 y_1(x)&=1+x, \\
 y_2(x)&=1+3x+3x^2, \\
 y_3(x)&=1+6x+15x^2+15x^3.
\end{align*}
We present here a grammatical characterization of the Bessel polynomials $y_n(x)$.
\begin{theorem}\label{Thm11}
If $G=\{a\rightarrow ab, b\rightarrow b^2c, c\rightarrow bc^2\}$, then
\begin{equation*}\label{Dnab}
D^n(ab)=ab^{n+1}y_n(c)\quad {\text for}\quad n\geq 0.
\end{equation*}
\end{theorem}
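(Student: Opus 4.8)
The plan is to argue by induction on $n$. The base case $n=0$ is immediate: since $D^0$ is the identity and $y_0(c)=1$, we have $D^0(ab)=ab=ab^{0+1}y_0(c)$. For the inductive step I would assume $D^n(ab)=ab^{n+1}y_n(c)$ and compute $D^{n+1}(ab)=D\bigl(ab^{n+1}y_n(c)\bigr)$ by applying the substitution rules of $G$ together with the product and chain rules for $D$.

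First I would handle the monomial factor. Using $D(a)=ab$ and $D(b)=b^2c$, one finds $D(b^{n+1})=(n+1)b^{n+2}c$, so that
\begin{equation*}
D(ab^{n+1})=D(a)b^{n+1}+aD(b^{n+1})=ab^{n+2}\bigl(1+(n+1)c\bigr).
\end{equation*}
Next, treating $y_n(c)$ as an analytic function of $c$ and using $D(c)=bc^2$, the chain rule gives $D\bigl(y_n(c)\bigr)=bc^2y_n'(c)$. Combining these through the product rule yields
\begin{equation*}
D^{n+1}(ab)=ab^{n+2}\Bigl[\bigl(1+(n+1)c\bigr)y_n(c)+c^2y_n'(c)\Bigr].
\end{equation*}
Thus the induction closes precisely when the bracketed polynomial equals $y_{n+1}(c)$.

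The crux of the argument, and the step I expect to be the main obstacle, is therefore to establish the differential--recurrence relation
\begin{equation*}
y_{n+1}(x)=\bigl(1+(n+1)x\bigr)y_n(x)+x^2y_n'(x).
\end{equation*}
I would prove this by comparing coefficients in the explicit expansion $y_n(x)=\sum_{k}a_{n,k}x^k$ with $a_{n,k}=\frac{(n+k)!}{(n-k)!\,k!\,2^k}$. Extracting the coefficient of $x^k$ on the right-hand side collapses the contributions of $y_n$, $(n+1)xy_n$ and $x^2y_n'$ into $a_{n,k}+(n+k)a_{n,k-1}$, so the whole statement reduces to the factorial identity
\begin{equation*}
a_{n+1,k}=a_{n,k}+(n+k)a_{n,k-1},
\end{equation*}
which follows after clearing the factors $2^k$ and combining the two terms over the common denominator $(n-k+1)!\,k!$.

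Finally, I would remark that this differential--recurrence can alternatively be extracted from the standard three--term recurrence $y_{n+1}(x)=(2n+1)xy_n(x)+y_{n-1}(x)$ quoted above, but the coefficient comparison gives the cleanest self-contained route and avoids carrying a second induction hypothesis for $y_{n-1}$. Note also that the grammar has been engineered so that $D$ acting on the \emph{pure} factor $ab^{n+1}$ already reproduces the operator $\bigl(1+(n+1)x\bigr)+x^2\frac{d}{dx}$ after normalizing by $ab^{n+2}$, which is exactly why the Bessel polynomials emerge.
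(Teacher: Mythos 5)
Your proof is correct and takes essentially the same route as the paper's: both are inductions on $n$ whose computational core is the coefficient identity $a(n+1,k)=a(n,k)+(n+k)a(n,k-1)$, which the paper verifies directly for the coefficient array of $D^n(ab)$ and which you repackage as the differential--recurrence $y_{n+1}(x)=\bigl(1+(n+1)x\bigr)y_n(x)+x^2y_n'(x)$ before checking it coefficientwise. The difference is purely one of bookkeeping (triangular arrays versus polynomial identities), so nothing needs to change.
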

\begin{proof}
Let $$a(n,k)=\frac{(n+k)!}{2^k(n-k)!k!}.$$
Then $y_n(c)=\sum_{k=0}^na(n,k)c^k$.
It is easy to verify that
\begin{equation}\label{ank-recurrence}
a(n+1,k)=a(n,k)+(n+k)a(n,k-1).
\end{equation}
For $n\geq 0$,
we define
\begin{equation}\label{Dnab-def}
D^n(ab)=ab^{n+1}\sum_{k=0}^{n}E(n,k)c^k.
\end{equation}
Note that $D(ab)=ab^2(1+c)$. Hence $E(1,0)=a(1,0),E(1,1)=a(1,1)$.
It follows from~\eqref{Dnab-def} that
$$D^{n+1}(ab)=D(D^n(ab))=ab^{n+2}\sum_{k=0}^nE(n,k)c^k+ab^{n+2}\sum_{k=0}^n(n+k+1)E(n,k)c^{k+1}.$$
Therefore,
$$E(n+1,k)=E(n,k)+(n+k)E(n,k-1).$$
Comparing with~\eqref{ank-recurrence}, we see that
the coefficients $E(n,k)$ satisfy the same recurrence relation and initial conditions as $a(n,k)$, so they agree.
\end{proof}

For the context-free grammar $$G=\{a\rightarrow ab, b\rightarrow b^2c, c\rightarrow bc^2\},$$
in the same way as above we find that
$$D^n(a^2b)=2^na^2b^{n+1}y_n\left(\frac{c}{2}\right)\quad {\text for}\quad n\geq 0.$$
By Theorem~\ref{Thm11}, we obtain $D^k(a)=ab^ky_{k-1}(c)$ for $k\geq 0$. The {\it double factorial} of odd numbers are defined by
\begin{equation*}
(2n-1)!!=1 \cdot 3 \cdot 5 \cdot \dots \cdot (2n-1),
\end{equation*}
and for even numbers
\begin{equation*}
(2n)!!=2 \cdot 4 \cdot 6 \cdot \dots \cdot (2n).
\end{equation*}
As usual, set $(-1)!!=0!!=1$. It is clear that
\begin{equation*}\label{Dnb}
D^{n}(b)=(2n-1)!!b^{n+1}c^n\quad {\text for}\quad n\geq 0.
\end{equation*}

By~\eqref{Dnab-Leib}, the following corollary is immediate.
\begin{corollary}
For $n\geq 0$, we have
$$y_n(x)=\sum_{k=0}^n(2n-2k-1)!!\binom{n}{k}y_{k-1}(x)x^{n-k}.$$
\end{corollary}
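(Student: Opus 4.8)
The plan is to apply Leibniz's formula \eqref{Dnab-Leib} to the product $ab$, treating $a$ and $b$ as the two factors. This converts the single quantity $D^n(ab)$ into a sum over the ways the $n$ derivatives distribute between $a$ and $b$, and each of the two resulting families $D^k(a)$ and $D^{n-k}(b)$ has already been evaluated in closed form in the discussion following Theorem~\ref{Thm11}.

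First I would write
$$D^n(ab)=\sum_{k=0}^n\binom{n}{k}D^k(a)D^{n-k}(b),$$
and then substitute the two known evaluations $D^k(a)=ab^ky_{k-1}(c)$ and $D^{n-k}(b)=(2n-2k-1)!!\,b^{n-k+1}c^{n-k}$. The powers of $b$ coming from the two factors combine to $b^k\cdot b^{n-k+1}=b^{n+1}$, independent of the summation index $k$, so the entire prefactor $ab^{n+1}$ can be pulled outside the sum, leaving
$$D^n(ab)=ab^{n+1}\sum_{k=0}^n\binom{n}{k}(2n-2k-1)!!\,y_{k-1}(c)\,c^{n-k}.$$

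Finally I would compare this with the evaluation $D^n(ab)=ab^{n+1}y_n(c)$ furnished by Theorem~\ref{Thm11}. Since $a$ and $b$ are independent indeterminates, the common factor $ab^{n+1}$ cancels, yielding the desired identity in the variable $c$; renaming $c$ as $x$ gives precisely the stated corollary. I do not anticipate a genuine obstacle, since the result is, as advertised, immediate from Leibniz's formula; the only points worth a moment's care are the boundary terms. At $k=n$ one has $D^0(b)=b$, which is consistent with the convention $(-1)!!=1$ fixed above, and at $k=0$ one has $y_{-1}(c)=1$ by the initial conditions on the Bessel polynomials. Both are compatible with the conventions already in force, so the sum indeed runs correctly over the full range $0\le k\le n$.
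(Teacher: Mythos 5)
Your proof is correct and follows exactly the route the paper intends: the paper derives the corollary as ``immediate'' from Leibniz's formula \eqref{Dnab-Leib} applied to $D^n(ab)$, using the evaluations $D^k(a)=ab^ky_{k-1}(c)$ and $D^{n-k}(b)=(2n-2k-1)!!\,b^{n-k+1}c^{n-k}$ together with Theorem~\ref{Thm11}, which is precisely your argument. Your attention to the boundary conventions $(-1)!!=1$ and $y_{-1}=1$ is a welcome addition that the paper leaves implicit.
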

\section{Polynomials associated with diagonal Pad\'e approximation to the exponential function}\label{approximation}
The Pad\'e approximations arise naturally in many branches of mathematics and have been extensively investigated (see~\cite{Prevost10} for instance).
The {\it diagonal Pad\'e approximation} to the exponential function $e^x$ is the unique rational function $$R_n(x)=\frac{P_n(x)}{P_n(-x)},$$
where $$P_n(x)=\sum_{k=0}^nM(n,k)x^{n-k} \quad {\text and}\quad M(n,k)=\frac{(n+k)!}{(n-k)!k!}.$$
Clearly, $P_n(1)=y_n(2)$, where $y_n(x)$ is the Bessel polynomials.
It is easy to verify that the numbers $M(n,k)$ satisfy the recurrence relation
\begin{equation}\label{Mnk-recu-1}
M(n+1,k)=M(n,k)+(2n+2k)M(n,k-1).
\end{equation}
The first few of the polynomials $P_n(x)$ are given as follows (see~\cite[A113025]{Sloane}):
\begin{align*}
  P_0(x)& =1, \\
  P_1(x)& =x+2, \\
  P_2(x)& =x^2+6x+12, \\
  P_3(x)& =x^3+12x^2+60x+120.
\end{align*}

We present here a grammatical characterization of the polynomials $P_n(x)$.
\begin{theorem}\label{thm8}
If $G=\{a\rightarrow ab^2, b\rightarrow b^3c^2, c\rightarrow b^2c^3\}$,
then
\begin{equation*}
D^n(ab^2)=ab^{2n+2}c^{2n}P_n\left(\frac{1}{c^2}\right).
\end{equation*}
\end{theorem}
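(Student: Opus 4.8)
The plan is to mimic the proof of Theorem~\ref{Thm11} by induction on $n$, after first rewriting the right-hand side in a form that exposes only even powers of $c$. Since $P_n(x)=\sum_{k=0}^nM(n,k)x^{n-k}$, substituting $x=1/c^2$ and multiplying by $c^{2n}$ cancels the negative exponents and gives
\begin{equation*}
ab^{2n+2}c^{2n}P_n\!\left(\frac{1}{c^2}\right)=ab^{2n+2}\sum_{k=0}^nM(n,k)c^{2k}.
\end{equation*}
So it suffices to prove $D^n(ab^2)=ab^{2n+2}\sum_{k=0}^nM(n,k)c^{2k}$. I would introduce coefficients $F(n,k)$ by writing $D^n(ab^2)=ab^{2n+2}\sum_{k=0}^nF(n,k)c^{2k}$ and then show that $F(n,k)$ obeys the same recurrence and initial data as $M(n,k)$.

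For the base case, $D^0(ab^2)=ab^2$ matches the formula with $M(0,0)=1$, and a quick check of $D(ab^2)=ab^4(1+2c^2)$ confirms $F(1,0)=M(1,0)=1$ and $F(1,1)=M(1,1)=2$. The heart of the induction is the derivative of a single monomial $ab^{2n+2}c^{2k}$. Applying the product rule with $D(a)=ab^2$, $D(b)=b^3c^2$ and $D(c)=b^2c^3$ gives
\begin{equation*}
D\!\left(ab^{2n+2}c^{2k}\right)=ab^{2n+4}c^{2k}+(2n+2+2k)\,ab^{2n+4}c^{2k+2}.
\end{equation*}
Here I would note that every production rule carries a factor $b^2$ and raises the exponent of $c$ by an even amount, which is exactly why only even powers of $c$ ever occur and why the substitution above collapses so cleanly.

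Summing this identity over $k$ against $F(n,k)$ and shifting the index in the second sum ($c^{2k+2}\mapsto c^{2k}$) yields
\begin{equation*}
F(n+1,k)=F(n,k)+(2n+2k)F(n,k-1),
\end{equation*}
which is precisely the recurrence~\eqref{Mnk-recu-1} for $M(n,k)$. Since the boundary conditions ($F(n,k)=0$ outside $0\le k\le n$) and the initial value agree as well, we conclude $F(n,k)=M(n,k)$, completing the induction. I do not expect a serious obstacle: the single point demanding care is the index shift, where the weight $(2n+2+2k)$ evaluated at $k-1$ must reproduce the factor $(2n+2k)$ of~\eqref{Mnk-recu-1}. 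Making that arithmetic line up is the whole substance of the argument.
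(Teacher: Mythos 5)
Your proposal is correct and follows essentially the same route as the paper's proof: introduce coefficients for $D^n(ab^2)=ab^{2n+2}\sum_{k=0}^n F(n,k)c^{2k}$, check the initial values against $M(1,0)=1$, $M(1,1)=2$, and show via the derivation rules that $F(n+1,k)=F(n,k)+(2n+2k)F(n,k-1)$, which is recurrence~\eqref{Mnk-recu-1}. Your version is slightly more explicit than the paper's (spelling out the monomial derivative and the rewriting of $c^{2n}P_n(1/c^2)$ as $\sum_k M(n,k)c^{2k}$), but the substance is identical.
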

\begin{proof}
For $n\geq0$, we define
\begin{equation}\label{Nnk-recu-2}
D^n(ab^2)=ab^{2n+2}\sum_{k=0}^nN(n,k)c^{2k}.
\end{equation}
Note that $D(ab^2)=ab^4(1+2c^2)$.
Hence $N(1,0)=M(1,0),N(1,1)=M(1,1)$.
It follows from~\eqref{Nnk-recu-2} that
$$D^{n+1}(ab^2)=ab^{2n+4}\sum_{k=0}^nN(n,k)c^{2k}+ab^{2n+4}\sum_{k=0}^n(2n+2k+2)N(n,k)c^{2k+2}.$$
Therefore,
$$N(n+1,k)=N(n,k)+(2n+2k)N(n,k-1).$$
Comparing with~\eqref{Mnk-recu-1}, we see that
the coefficients $N(n,k)$ satisfy the same recurrence relation and initial conditions as $M(n,k)$, so they agree.
\end{proof}

Along the same lines, we immediately deduce the following corollary.
\begin{corollary}
Let $y_n(x)$ be the Bessel polynomials.
If $G=\{a\rightarrow ab^2, b\rightarrow b^3c^2, c\rightarrow b^2c^3\}$,
then
\begin{equation*}
D^n(a^2b^2)=2^na^2b^{2n+2}y_n(c^2).
\end{equation*}
\end{corollary}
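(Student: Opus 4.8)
The plan is to imitate the proof of Theorem~\ref{thm8} almost verbatim, the only new feature being the extra factor that the square $a^2$ contributes at each differentiation. Recall from the proof of Theorem~\ref{Thm11} the Bessel coefficients $a(n,k)=\frac{(n+k)!}{2^k(n-k)!k!}$, so that $y_n(c^2)=\sum_{k=0}^n a(n,k)c^{2k}$ and the target identity is equivalent to
$$D^n(a^2b^2)=a^2b^{2n+2}\sum_{k=0}^n 2^n a(n,k)c^{2k}.$$
Accordingly I would posit $D^n(a^2b^2)=a^2b^{2n+2}\sum_{k=0}^n T(n,k)c^{2k}$ and prove by induction on $n$ that $T(n,k)=2^n a(n,k)$.

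For the base case, a direct computation with the grammar gives $D(a^2)=2a\cdot ab^2=2a^2b^2$ and $D(b^2)=2b\cdot b^3c^2=2b^4c^2$, whence $D(a^2b^2)=2a^2b^4(1+c^2)$; thus $T(1,0)=T(1,1)=2$, which agrees with $2a(1,0)=2a(1,1)=2$. For the inductive step I would apply $D$ to a single monomial $a^2b^{2n+2}c^{2k}$ using $D(a^2)=2a^2b^2$, $D(b^{2n+2})=(2n+2)b^{2n+4}c^2$ and $D(c^{2k})=2kb^2c^{2k+2}$, and collect terms to obtain
$$D\left(a^2b^{2n+2}c^{2k}\right)=a^2b^{2n+4}\left[2c^{2k}+(2n+2k+2)c^{2k+2}\right].$$
Summing over $k$ and reading off the coefficient of $c^{2k}$ in $D^{n+1}(a^2b^2)=a^2b^{2n+4}\sum_k T(n+1,k)c^{2k}$ then yields the recurrence $T(n+1,k)=2T(n,k)+(2n+2k)T(n,k-1)$.

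It remains to check that $2^n a(n,k)$ satisfies this recurrence. Using the identity $a(n+1,k)=a(n,k)+(n+k)a(n,k-1)$ established in the proof of Theorem~\ref{Thm11}, I would multiply through by $2^{n+1}$ to get $2^{n+1}a(n+1,k)=2\cdot 2^n a(n,k)+(2n+2k)\cdot 2^n a(n,k-1)$, which is precisely the recurrence for $T(n,k)$. Since the initial values and the recurrences coincide, $T(n,k)=2^n a(n,k)$ for all $n,k$, and the corollary follows. There is no genuine obstacle here; the one point requiring care is bookkeeping the factor $2$ produced by $D(a^2)=2a^2b^2$ at every step, which is exactly what turns the diagonal Pad\'e coefficients $M(n,k)$ of Theorem~\ref{thm8} into the rescaled Bessel coefficients $2^n a(n,k)$ and supplies the overall $2^n$.
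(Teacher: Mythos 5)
Your proof is correct and is exactly the argument the paper intends: the paper omits the proof, stating the corollary follows ``along the same lines'' as Theorem~\ref{thm8}, and your recurrence-comparison induction (base case $D(a^2b^2)=2a^2b^4(1+c^2)$, recurrence $T(n+1,k)=2T(n,k)+(2n+2k)T(n,k-1)$, matched against $2^n a(n,k)$ via the Bessel recurrence from Theorem~\ref{Thm11}) is precisely that omitted argument.
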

\section{Double factorials}\label{sec:factorial}
The following identity was studied systematically by
Callan~\cite[Section 4.8]{Callan}:
\begin{equation}\label{Callan}
\sum_{k=1}^nk!\binom{2n-k-1}{k-1}(2n-2k-1)!!=(2n-1)!!.
\end{equation}
As pointed out by Callan~\cite{Callan}, the identity~\eqref{Callan} counts different combinatorial structures, such as  {\it increasing ordered trees} of $n$ edges by outdegree $k$ of the root and the sum of the weights of all vertices labeled $k$ at depth $n-1$ in the {\it Catalan tree} (see~\cite[\textsf{A102625}]{Sloane}).

Let $$R(n,k)=k!\binom{2n-k-1}{k-1}(2n-2k-1)!!.$$
Thus, $\sum_{k=1}^nR(n,k)=(2n-1)!!$. It is easy to verify that
\begin{equation}\label{Tnk-recu}
R(n+1,k)=(2n-k)R(n,k)+kR(n,k-1),
\end{equation}
with initial conditions $R(0,0)=1$ and $R(0,k)=0$ for $k\geq 1$ or $k<0$. For $n\geq 1$,
let $R_n(x)=\sum_{k=1}^nR(n,k)x^k$.
The first few of the polynomials $R_n(x)$ are
\begin{align*}
  R_1(x)& =x, \\
  R_2(x)& =x+2x^2, \\
  R_3(x)& =3x+6x^2+6x^3,\\
  R_4(x)& =15x+30x^2+36x^3+24x^4.
\end{align*}

\begin{theorem}\label{Thm5}
If $G=\{a\rightarrow a^2b, b\rightarrow b^2c, c\rightarrow bc^2\}$, then
\begin{equation}\label{Dnab-3}
D^n(a)=ab^{n}\sum_{k=1}^{n}R(n,k)a^{k}c^{n-k}\quad {\text for}\quad n\geq 0.
\end{equation}
\end{theorem}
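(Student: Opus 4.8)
The plan is to mirror the inductive argument used for Theorems~\ref{Thm11} and~\ref{thm8}. I would introduce coefficients $E(n,k)$ by setting
$$D^n(a)=ab^{n}\sum_{k=1}^{n}E(n,k)a^{k}c^{n-k},$$
and then show that $E(n,k)$ satisfies the very recurrence~\eqref{Tnk-recu} together with the same boundary data as $R(n,k)$; matching them forces $E(n,k)=R(n,k)$. The base case is immediate: $D(a)=a^2b=ab\cdot R(1,1)a\,c^{0}$ since $R(1,1)=1$, and for $n=0$ the claim is the trivial identity $D^0(a)=a$, corresponding to the initial value $R(0,0)=1$.

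For the inductive step I would absorb the leading $a$ into the sum, so that $D^n(a)=\sum_{k=1}^{n}E(n,k)\,a^{k+1}b^{n}c^{n-k}$, and then apply $D$ to a single monomial $a^{k+1}b^{n}c^{n-k}$ via the product rule, differentiating each factor according to the grammar $G$:
\begin{align*}
D(a^{k+1})&=(k+1)a^{k+2}b,\\
D(b^{n})&=nb^{n+1}c,\\
D(c^{n-k})&=(n-k)bc^{n-k+1}.
\end{align*}
Substituting and collecting, the contribution indexed by $k$ splits into exactly two monomials,
$$(k+1)a^{k+2}b^{n+1}c^{n-k}+(2n-k)a^{k+1}b^{n+1}c^{n-k+1},$$
where the weight $2n-k$ appears because the $b^{n}$ and $c^{n-k}$ factors both feed into the same monomial $a^{k+1}b^{n+1}c^{n-k+1}$, so their weights $n$ and $n-k$ add.

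The step that will require the most care is the reindexing needed to read off the coefficient of a fixed monomial in $D^{n+1}(a)=\sum_{k=1}^{n}E(n,k)[\cdots]$. Writing the target as $D^{n+1}(a)=\sum_{j=1}^{n+1}E(n+1,j)a^{j+1}b^{n+1}c^{(n+1)-j}$, I would note that the first family of monomials contributes to the index $j=k+1$ and the second to $j=k$, so that the coefficient of $a^{j+1}b^{n+1}c^{(n+1)-j}$ equals $jE(n,j-1)+(2n-j)E(n,j)$. This is precisely the right-hand side of~\eqref{Tnk-recu} with $k$ replaced by $j$, giving $E(n+1,j)=(2n-j)E(n,j)+jE(n,j-1)$. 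Since the boundary values agree as well, we conclude $E(n,k)=R(n,k)$, which proves the theorem.
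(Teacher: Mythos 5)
Your proof is correct and takes essentially the same route as the paper: both arguments define the coefficients via the expansion of $D^n(a)$, apply $D$ termwise according to the grammar, and identify the resulting recurrence $E(n+1,k)=(2n-k)E(n,k)+kE(n,k-1)$ with~\eqref{Tnk-recu}, matching initial values to conclude $E(n,k)=R(n,k)$. Your explicit monomial-by-monomial product rule and the careful reindexing are just a more detailed rendering of the same computation the paper performs in one line.
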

\begin{proof}
Note that $D(a)=a^2b$ and $D^2(a)=ab^2(ac+2a^2)$.
For $n\geq 1$, we define
\begin{equation}\label{Dna-def}
D^n(a)=ab^{n}\sum_{k=1}^{n}r(n,k)a^{k}c^{n-k}.
\end{equation}
Hence $r(1,1)=R(1,1),r(2,1)=R(2,1)$ and $r(2,2)=R(2,2)$.
It follows from~\eqref{Dna-def} that
$$D(D^n(a))=ab^{n+1}\sum_{k=1}^n(2n-k)r(n,k)a^kc^{n-k+1}+ab^{n+1}\sum_{k=0}^n(k+1)r(n,k)a^{k+1}c^{n-k}.$$
Therefore,
$$r(n+1,k)=(2n-k)r(n,k)+kr(n,k-1).$$
Comparing with~\eqref{Tnk-recu}, we see that
the coefficients $r(n,k)$ satisfy the same recurrence relation and initial conditions as $R(n,k)$, so they agree.
\end{proof}

In the following discussion, we also consider the context-free grammar
\begin{equation*}\label{grammar}
G=\{a\rightarrow a^2b, b\rightarrow b^2c, c\rightarrow bc^2\}.
\end{equation*}
Note that $$D(ab)=a^2b^2+ab^2c,D^2(ab)=ab^3(3c^2+3ac+2a^2).$$
For $n\geq 0$, we define
\begin{equation}\label{Hnk-def}
D^n(ab)=ab^{n+1}\sum_{k=0}^nH(n,k)a^kc^{n-k}.
\end{equation}
It follows that
$$D(D^n(ab))=ab^{n+2}\sum_{k=0}^n(2n-k+1)H(n,k)a^kc^{n-k+1}+ab^{n+2}\sum_{k=0}^n(k+1)H(n,k)a^{k+1}c^{n-k}.$$
Hence, the numbers $H(n,k)$ satisfy the recurrence relation
\begin{equation}\label{Gnk-recu}
H(n+1,k)=(2n-k+1)H(n,k)+kH(n,k-1),
\end{equation}
with initial conditions $H(1,0)=H(1,1)=1$ and $H(1,k)=0$ for $k\geq 2$ or $k<0$.
Using~\eqref{Gnk-recu}, it is easy to verify that
$$H(n,k)=\frac{(2n-k)!}{2^{n-k}(n-k)!}.$$
It should be noted that the numbers $H(n,k)$ are entries in a {\it double factorial triangle} (see~\cite[\textsf{A193229}]{Sloane}).
In particular, we have $H(n,0)=(2n-1)!!, H(n,n)=n!$ and $\sum_{k=0}^nH(n,k)=(2n)!!$.
Moreover, combining~\eqref{Dnab-Leib}, \eqref{Dnab-3} and~\eqref{Hnk-def}, we obtain
$$H(n,k)=\sum_{m=k}^n\binom{n}{m}(2n-2m-1)!!R(m,k)$$
for $n\geq 1$ and $0\leq k\leq n$.

For $n\geq 1$, we define
$$x(x+2)(x+4)\cdots (x+2n-2)=\sum_{k=1}^np(n,k)x^k$$
and
$$(x+1)(x+3)\cdots (x+2n-1)=\sum_{k=0}^nq(n,k)x^k.$$
The the triangular arrays $\{p({n,k})\}_{n\geq 1,1\leq k\leq n}$ and $\{q({n,k})\}_{n\geq 1,0\leq k\leq n}$ are both {\it double Pochhammer triangles} (see~\cite[\textsf{A039683,A028338}]{Sloane}).
The following theorem is in a sense ``dual" to Theorem~\ref{Thm5},
and we omit the proof for brevity.
\begin{theorem}\label{thmpq}
If $G=\{a\rightarrow ab^2, b\rightarrow b^2c, c\rightarrow bc^2\}$,
then we have
$$D^n(a)=ab^n\sum_{k=1}^np(n,k)b^kc^{n-k}$$
and
$$D^n(ab)=ab^{n+1}\sum_{k=0}^nq(n,k)b^kc^{n-k}.$$
\end{theorem}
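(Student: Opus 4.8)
The plan is to follow the pattern established in the proofs of Theorems~\ref{Thm11} and~\ref{Thm5}: first record the recurrences obeyed by the triangles $p(n,k)$ and $q(n,k)$, and then show that the coefficients produced by iterating $D$ satisfy the same recurrences and the same initial data, so that the two families coincide. Multiplying the defining product for $p$ by the incoming factor $(x+2n)$ and comparing coefficients of $x^k$ gives
$$p(n+1,k)=p(n,k-1)+2n\,p(n,k),\qquad p(1,1)=1,$$
while multiplying the product for $q$ by $(x+2n+1)$ gives
$$q(n+1,k)=q(n,k-1)+(2n+1)\,q(n,k),\qquad q(0,0)=1.$$
These are the targets I would aim to reproduce grammatically.

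The engine of the argument is a single computation of the action of $D$ on a monomial. For $G=\{a\rightarrow ab^2,\ b\rightarrow b^2c,\ c\rightarrow bc^2\}$ one has $D(b^i)=ib^{i+1}c$ and $D(c^j)=jbc^{j+1}$, so by the product rule
$$D(ab^ic^j)=ab^{i+2}c^j+(i+j)\,ab^{i+1}c^{j+1}.$$
The key observation is that within the support of $D^n(a)$ every monomial has the shape $ab^{n+k}c^{n-k}$, so its $b,c$-degree $i+j$ equals $2n$ independently of $k$; likewise every monomial in $D^n(ab)$ has the shape $ab^{n+1+k}c^{n-k}$ with $i+j=2n+1$. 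Thus at each fixed level the weight $(i+j)$ in the monomial rule is the constant $2n$ (respectively $2n+1$), which is precisely the multiplier appearing in the $p$ (respectively $q$) recurrence.

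With this in hand I would write $D^n(a)=ab^n\sum_{k}P(n,k)b^kc^{n-k}$ and $D^n(ab)=ab^{n+1}\sum_{k}Q(n,k)b^kc^{n-k}$, apply $D$ termwise using the monomial rule with $i+j=2n$ and $i+j=2n+1$, and then reindex the two resulting sums (the $ab^{i+2}c^j$ piece shifts $k\mapsto k+1$, while the $ab^{i+1}c^{j+1}$ piece keeps $k$ fixed). Collecting like monomials yields exactly $P(n+1,k)=P(n,k-1)+2n\,P(n,k)$ and $Q(n+1,k)=Q(n,k-1)+(2n+1)\,Q(n,k)$. Since $D(a)=ab^2$ and $D(ab)=ab^2(b+c)$ give $P(1,1)=1=p(1,1)$ and $Q(1,0)=Q(1,1)=1=q(1,0)=q(1,1)$, both arrays agree with their targets and the theorem follows by induction. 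The argument is essentially mechanical; the only point requiring care is the bookkeeping of the index shifts together with the verification that the degree sum $i+j$ is genuinely constant across each level, since it is this constancy that converts the variable factor $(i+j)$ of the monomial rule into the fixed coefficients $2n$ and $2n+1$ demanded by the two Pochhammer recurrences.
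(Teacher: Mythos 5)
Your proof is correct and follows exactly the template the paper uses for Theorems~\ref{Thm11}, \ref{thm8} and~\ref{Thm5} (the proof the paper omits here as ``dual'' to Theorem~\ref{Thm5}): record the recurrences $p(n+1,k)=p(n,k-1)+2n\,p(n,k)$ and $q(n+1,k)=q(n,k-1)+(2n+1)\,q(n,k)$, show the grammatically generated coefficients obey the same recurrences, and match the initial data at $n=1$. Your monomial rule $D(ab^ic^j)=ab^{i+2}c^j+(i+j)ab^{i+1}c^{j+1}$ together with the constancy of $i+j$ at each level is just a slightly more explicit organization of the same induction, so nothing differs in substance.
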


Set $p(0,0)=q(0,0)=1$. By~\eqref{Dnab-Leib},
we immediately obtain
$$q(n,k)=\sum_{m=k}^n\binom{n}{m}(2n-2m-1)!!p(m,k)$$
for $n\geq 0$ and $0\leq k\leq n$.

\section{Catalan triangle}\label{sec:Catalan's triangle}
The classical {\it Catalan triangle}
is defined by the recurrence relation
$$T(n,k)=T(n-1,k)+T(n,k-1),$$
with initial conditions $T(0,0)=1$ and $T(0,k)=0$ for $k>0$ or $k<0$ (see~\cite[\textsf{A009766}]{Sloane}).
The numbers $T(n,k)$ are often called {\it ballot numbers}.  Explicitly,
\begin{equation}\label{Catalan}
T(n,k)=\binom{n+k}{k}\frac{n-k+1}{n+1}\quad {\text for}\quad 0\leq k\leq n.
\end{equation}
Moreover, $\sum_{k=0}^{n}T(n,k)=T(n+1,n+1)=C(n+1)$, where $C(n)$ is the  well known {\it Catalan number}.
Catalan numbers appear in a wide range of problems (see~\cite{Sagan12} for instance).

It follows from~\eqref{Catalan} that
\begin{equation}\label{Catalan-recu}
(n+2)T(n+1,k)=(n-k+2)T(n,k)+(2n+2k)T(n,k-1).
\end{equation}
This recurrence relation gives rise to the following result.

\begin{theorem}
If $G=\{a\rightarrow a^2b^2, b\rightarrow b^3c^2, c\rightarrow b^2c^3\}$,
then we have
\begin{equation*}\label{Catalan-Dnab}
D^n(a^2b^2)=(n+1)!a^2b^{2n+2}\sum_{k=0}^nT(n,k)a^{n-k}c^{2k}.
\end{equation*}
\end{theorem}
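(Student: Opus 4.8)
The plan is to follow the template established in the proofs of Theorems~\ref{Thm11}, \ref{thm8} and \ref{Thm5}: introduce undetermined coefficients, apply $D$ once to the assumed form, read off the recurrence they must satisfy, and match it against~\eqref{Catalan-recu} together with the initial data. Concretely, for $n\geq 0$ I would set
\begin{equation*}
D^n(a^2b^2)=(n+1)!\,a^2b^{2n+2}\sum_{k=0}^nt(n,k)a^{n-k}c^{2k},
\end{equation*}
with $t(n,k)$ to be determined, and aim to show $t(n,k)=T(n,k)$.

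The engine of the computation is the action of $D$ on a single monomial. From the grammar $G=\{a\rightarrow a^2b^2,\ b\rightarrow b^3c^2,\ c\rightarrow b^2c^3\}$ one has $D(a)=a^2b^2$, $D(b)=b^3c^2$ and $D(c)=b^2c^3$, so the product rule gives
\begin{equation*}
D(a^pb^qc^r)=p\,a^{p+1}b^{q+2}c^r+(q+r)\,a^pb^{q+2}c^{r+2}.
\end{equation*}
Observe that $D$ always raises the exponent of $b$ by $2$, which is exactly why the normalization $b^{2n+2}$ is stable under the induction. Applying this to the typical term $a^{n-k+2}b^{2n+2}c^{2k}$ (here $p=n-k+2$, $q=2n+2$, $r=2k$) and factoring out $a^2b^{2n+4}$, the first piece lands in the $c^{2k}$ slot with weight $(n-k+2)$, while the second piece lands in the $c^{2(k+1)}$ slot with weight $(2n+2k+2)$.

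Collecting the coefficient of $a^{(n+1)-k}c^{2k}$ then yields
\begin{equation*}
(n+2)!\,t(n+1,k)=(n+1)!\left[(n-k+2)\,t(n,k)+(2n+2k)\,t(n,k-1)\right],
\end{equation*}
and here is where the factorial prefactor does its work: dividing by $(n+1)!$ converts the leading $(n+2)!$ into precisely the factor $(n+2)$ appearing in~\eqref{Catalan-recu}, giving
\begin{equation*}
(n+2)\,t(n+1,k)=(n-k+2)\,t(n,k)+(2n+2k)\,t(n,k-1).
\end{equation*}
Finally I would check the base case $D^0(a^2b^2)=a^2b^2$, which forces $t(0,0)=1=T(0,0)$; since $t(n,k)$ and $T(n,k)$ then obey the same recurrence with the same initial and boundary conditions, they coincide.

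I do not anticipate a genuine obstacle. The one point demanding care is the bookkeeping of the three exponents in $D(a^pb^qc^r)$, and in particular recognizing that the non-monic weight $(n+2)$ in the Catalan recurrence is produced exactly by the ratio $(n+2)!/(n+1)!$ of the factorial prefactors. If that normalization is mistracked the recurrence will fail to match~\eqref{Catalan-recu}, so verifying the index shift $c^{2k}\mapsto c^{2(k+1)}$ in the second term, and the resulting substitution $k\mapsto k-1$ when re-indexing, is the step I would double-check most carefully.
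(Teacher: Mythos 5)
Your proposal is correct and matches the paper's own proof essentially step for step: both posit the form $D^n(a^2b^2)=(n+1)!\,a^2b^{2n+2}\sum_{k=0}^n t(n,k)a^{n-k}c^{2k}$, apply $D$ once to extract the recurrence $(n+2)t(n+1,k)=(n-k+2)t(n,k)+(2n+2k)t(n,k-1)$, and identify $t(n,k)=T(n,k)$ by matching against~\eqref{Catalan-recu} and the initial data. Your explicit monomial formula $D(a^pb^qc^r)=p\,a^{p+1}b^{q+2}c^r+(q+r)\,a^pb^{q+2}c^{r+2}$ and the observation that the factor $(n+2)$ arises from the ratio $(n+2)!/(n+1)!$ are just slightly more detailed bookkeeping of the same computation.
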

\begin{proof}
It is easy to verify that $D(a^2b^2)=2a^2b^4(a+c^2)$ and $D^2(a^2b^2)=3!a^2b^6(a^2+2ac^2+2c^4)$.
For $n\geq 0$, we define
\begin{equation*}\label{tnk}
D^n(a^2b^2)=(n+1)!a^2b^{2n+2}\sum_{k=0}^nt(n,k)a^{n-k}c^{2k}.
\end{equation*}
Note that
$$\frac{D^{n+1}(a^2b^2)}{(n+1)!a^2b^{2n+4}}=\sum_{k=0}^n(n-k+2)t(n,k)a^{n-k+1}c^{2k}+\sum_{k=0}^n(2n+2k+2)t(n,k)a^{n-k}c^{2k+2}.$$
Thus, we get
$$(n+2)t(n+1,k)=(n-k+2)t(n,k)+(2n+2k)t(n,k-1).$$
Comparing with~\eqref{Catalan-recu}, we see that
the coefficients $t(n,k)$ satisfy the same recurrence relation and initial conditions as $T(n,k)$, so they agree.
\end{proof}

In the same way as above we find that if $G=\{a\rightarrow a^2b^2, b\rightarrow b^3c^2, c\rightarrow b^2c^3\}$, then
\begin{equation*}\label{Catalan-Dnab}
D^n(ab^2)=n!ab^{2n+2}\sum_{k=0}^n\binom{n+k}{k}a^{n-k}c^{2k}
\end{equation*}
and
\begin{equation*}\label{Catalan-Dnab}
D^n(b)=\prod_{k=0}^{n-1}(4k+1)b^{2n+1}c^{2n}.
\end{equation*}
It should be noted that $\binom{n+k}{k}$ is the number of lattice paths from $(0,0)$ to $(n,k)$ using steps $(1,0)$ and $(0,1)$ (see~\cite[\textsf{A046899}]{Sloane}) and $\prod_{k=0}^{n-1}(4k+1)$ is the {\it quartic factorial number} (see~\cite[A007696]{Sloane}).



\end{document}